\theoremstyle{plain}
\newtheorem{lemma}{Lemma}[section]
\newtheorem{teo}[lemma]{Theorem}
\newtheorem{propo}[lemma]{Proposition}
\newtheorem{definition}[lemma]{Definition}
\theoremstyle{definition}
\newtheorem{claim*}{Claim}
\theoremstyle{remark}
\newtheorem{remark}[lemma]{Remark}
\newcommand{\Cox}{\operatorname{Cox}}
\newcommand{\Pic}{\operatorname{Pic}}
\newcommand{\Sp}{\operatorname{Spec}}
\newcommand{\Proj}{\operatorname{Proj}}
\newcommand{\Nhom}{\operatorname{N}}
\newcommand{\Nef}{\operatorname{Nef}}
\newcommand{\Rsec}{\operatorname{R}}
\newcommand{\NE}{\operatorname{NE}}
\newcommand{\Hom}{\operatorname{Hom}}
\newcommand{\Neo}{\operatorname{\overline{NE}^1}}
\newcommand{\Aone}{\operatorname{A_1}}
\newcommand{\pp}{\mathbb{P}}
\newcommand{\qq}{\mathbb{Q}}
\newcommand{\rr}{\mathbb{R}}
\newcommand{\cc}{\mathbb{C}}
\newcommand{\zz}{\mathbb{Z}}
\newcommand{\nn}{\mathbb{N}}
\newcommand{\rmap}{\dashrightarrow}
\begin{document}

\title{\bf On embeddings of Mori dream spaces}

\author{John Levitt}
\address{
Department of Mathematics \newline
Occidental College \newline
Los Angeles, CA}
\email{levitt@oxy.edu}

\begin{abstract}
General criteria are given for when an embedding of a Mori dream space into another satisfies certain nice combinatorial conditions on some of their associated cones. An explicit example of such an embedding is studied.

\end{abstract}
\maketitle

\section{Introduction}

Mori dream spaces were introduced by Hu and Keel in ~\cite{hk} as varieties for which the minimal model program can be run in a very general way.  They also are the varieties for which the Cox ring is finitely generated.  Using a presentation of this ring, they showed that a Mori dream space $X$ always embeds into a toric variety $W$ in such a way that the combinatorics of $W$ describes much of the birational geometry of $X$.  In this paper we give simple criteria for when embeddings between Mori dream spaces behave this way, and an explicit example of a nontrivial such embedding for a nontoric del Pezzo surface.  Further background on the connection between Mori dream spaces and their Cox rings can be found in the excellent surveys ~\cite{lv} and ~\cite{mck}, which also give much more extensive references to the current literature.

The author would like to acknowledge and thank Paolo Cascini and James M$^c$Kernan for their helpful suggestions on my work.


\section{Mori dream spaces}

In this section we define our main objects of study, and discuss certain embeddings.  

\begin{definition} A normal projective variety $X$ is a \textbf{Mori Dream Space} if

    \begin{enumerate}
        \item $X$ is $\qq$-factorial and $\Pic{(X)}_{\qq}=\Nhom^1 {(X)}$,
        
        \item $\Nef{(X)}$ is the affine hull of finitely many semiample divisors,
        
        \item there is a finite collection of birational maps $f_i:X\rmap X_i$ which are isomorphisms in codimension one, such that when $D$ is a movable divisor on $X$, then there is an $i$ such that $D=f_i^*D_i$ for some semiample divisor $D_i$ on $X_i$.

    \end{enumerate}

\end{definition}

Examples of such spaces are all Fano varieties ~\cite{bchm}
and all toric varieties ~\cite{hk}.
\\

These spaces have a number of nice properties, and some interesting related cone structures.  Perhaps the most important property and hence their name is that the minimal model program can be run and will terminate using not just the canonical divisor to test for nefness, but any divisor ~\cite{hk}. 
Turning to the cone structures, there is a chamber structure on the pseudo-effective of divisors of a Mori dream space, which parametrizes the rational maps from it.

\begin{definition} Given a line bundle $L$ on a scheme $X$, the \textbf{section ring} is $$\Rsec_L:=\bigoplus_{n \in \nn}H^0(X,L^{\otimes n}).$$

\end{definition}

Note that when $D$ is effective and $\Rsec_D$ is finitely generated, there is an induced rational map $$f_D:X\rmap \Proj{(\Rsec_D)}.$$ We say that two such divisors, $D_1$ and $D_2$, are \textbf{Mori equivalent} if the rational maps $f_{D_i}$ have isomorphic images making the natural triangular diagram commute.  The equivalence classes, which we call \textbf{Mori chambers}, impose a natural chamber structure on $\overline{\NE}^1(X)$. For Mori dream spaces, these chambers are the same as the GIT chambers, equivalence classes of effective divisors whose linearizations give the same GIT quotient. 
These chamber structures are tied to a single ring:

\begin{definition} Suppose that the classes of the line bundles $L_1,...,L_n$ are a basis of $\Pic{(X)}$. The \textbf{Cox ring} of $X$ is defined:

$$\Cox{(X,L_1,...,L_n)}:=\bigoplus_{(m_1,...,m_n) \in \zz^n}H^0(X,L_1^{\otimes m_1}\otimes ... \otimes L_n^{\otimes m_n})$$

\end{definition}

 Note that this ring is graded naturally by $\Pic{(X)}$. Furthermore, when the choice of basis does not matter, such as for determining finite generation, we use the notation $\Cox{(X)}$.  We now get the following criterion for determining when we have a Mori dream space: \\
 
 \begin{teo} ~\cite{hk} Given a $\qq$-factorial projective variety $X$ with $\Pic{(X)}_{\qq}=\Nhom^1{(X)}$, $X$ is a Mori dream space iff $\Cox{(X)}$ is finitely generated.
 
 \end{teo}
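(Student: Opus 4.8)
The plan is to establish the two implications by quite different means: the direction ``$\Cox(X)$ finitely generated $\Rightarrow$ $X$ a Mori dream space'' via geometric invariant theory on the total coordinate space, and the converse by assembling section rings over the birational models that the dream structure supplies.

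For the first direction, fix a basis of $\Pic(X)$ as in the definition, so that $R := \Cox(X)$ is graded by $M := \Pic(X) \cong \zz^n$; let $T := \Hom(M, \mathbb{G}_m)$ act on $\widehat X := \Sp R$ through this grading. For $\alpha \in M_\qq$ in the interior of the effective cone the GIT quotient $\widehat X /\!/_\alpha T$ is projective over $\Sp R_0$, and for $\alpha$ ample it is $X$ with its polarization, since then $\bigoplus_{d \ge 0} R_{d\alpha} = \bigoplus_d H^0(X, \Osh_X(d\alpha))$. I would then invoke variation of GIT (Thaddeus, Dolgachev--Hu): the effective cone of linearizations carries a finite wall-and-chamber decomposition, the quotient is constant on each chamber, and crossing a wall induces a birational map of quotients which is an isomorphism in codimension one precisely when the wall lies in the interior of the region where the unstable locus has codimension $\ge 2$. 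From this dictionary one reads off the three conditions: $\qq$-factoriality of $X$, because $\widehat X$ is factorial up to the torsion of $\Pic(X)$ and torus quotients of such affines are $\qq$-factorial; $\Pic(X)_\qq = \Nhom^1(X)$, because $\Pic(X)$ is finitely generated and numerically trivial classes on such quotients are torsion; $\Nef(X)$ is the closure of the ample chamber, hence rational polyhedral, with semiample generators (each is a limit of ample linearizations and so defines a morphism to a lower-dimensional quotient); and the $X_i$ are the quotients attached to the remaining chambers inside the codimension-$\ge 2$ region (which becomes the movable cone), with $f_i : X \rmap X_i$ the induced small modifications, so that a movable $D$, lying in some such chamber, satisfies $D = f_i^* D_i$ with $D_i$ nef, hence semiample, on $X_i$.

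For the converse, suppose $X$ is a Mori dream space with small modifications $f_i : X \rmap X_i$. Each $f_i$ is an isomorphism in codimension one, so pullback of rational sections gives $\Cox(X_i) \cong \Cox(X)$ and I may work on whichever model is convenient. First I would show $\Neo(X)$ is rational polyhedral: the movable cone is the union of the finitely many rational polyhedral cones $f_i^* \Nef(X_i)$, and the effective cone is the convex hull of this union together with the classes of the prime divisors occurring as fixed components of effective classes, of which there are only finitely many. Next, on each $X_i$ the nef cone is spanned by finitely many semiample classes $D_{i,1}, \dots, D_{i,k_i}$, and the multigraded ring $\bigoplus_{(m_j) \in \nn^{k_i}} H^0(X_i, \sum_j m_j D_{i,j})$ is finitely generated: after passing to a common multiple of the $D_{i,j}$ one may assume they are globally generated and apply a Mumford-type argument to the associated morphisms. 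Finally I would glue the pieces: a section of a class outside the movable cone is (a section of its movable part) times the fixed equations, so the generators of the section rings of all the $X_i$ together with the finitely many equations of rigid prime divisors generate $\Cox(X)$.

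The main obstacle in the first direction is the careful correspondence between the GIT wall structure and birational geometry --- pinning down exactly which walls give isomorphisms in codimension one and checking that the $X_i$ verify condition (3) for \emph{every} movable divisor, including those lying on walls rather than in chamber interiors. In the second direction the delicate points are the finiteness of the rigid prime divisors and, more seriously, arranging that the cone-by-cone section rings assemble into a \emph{single} finitely generated algebra.
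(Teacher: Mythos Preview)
The paper does not prove this theorem: it is stated with a citation to Hu--Keel~\cite{hk} and no argument is given. So there is nothing in the paper to compare your proposal against.

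That said, your sketch is a faithful outline of the original Hu--Keel argument. One small redundancy: in the forward direction you list $\qq$-factoriality and $\Pic(X)_\qq = \Nhom^1(X)$ among the conditions to be ``read off'' from the GIT picture, but in the theorem as stated these are \emph{hypotheses}, not conclusions --- only conditions (2) and (3) of the definition need to be verified. Your identification of the obstacles is accurate: in the forward direction the care needed is exactly in handling divisors on walls (Hu--Keel deal with this via the existence of a general ample class in the chamber), and in the reverse direction the gluing of the cone-by-cone section rings into a single finitely generated algebra is indeed the substantive step, handled in~\cite{hk} by a direct argument using Zariski's lemma on finite generation of graded subrings.
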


  In the case of a Mori dream space the Cox ring, along with a line bundle, tells us about the small birational type of $X$. To see this, note that the Cox ring has an ideal that plays the role of the irrelevant ideal.  For a line bundle $L$, let $J_L:=\sqrt{(\Rsec_L)}$. This ideal determines the non-semistable points in $\Sp{(\Cox{(X)})}$.  \\
 
 \begin{teo}  ~\cite{hk},~\cite{lv}, A Mori dream space $X$ is a good geometric quotient of $\Sp{(\Cox{(X)})}-J_L $ by the torus $\Hom{(\Pic{(X)},k^*)}$, where $L$ is any ample line bundle on $X$.
 
 \end{teo}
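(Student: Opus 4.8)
The plan is to realize $X$ as the base of a torsor whose total space is an open subscheme of $\Sp(\Cox(X))$. Since $X$ is a Mori dream space, $\Cox(X)$ is finitely generated by the criterion above, and the presentation of $\Cox(X)$ presupposes $\Pic(X)$ free, say of rank $n$; write $G:=\Hom(\Pic(X),k^*)\cong(k^*)^n$ for the torus acting on $\Sp(\Cox(X))$ through the $\Pic(X)$-grading. First I would form the sheaf of $\Osh_X$-algebras $\mathcal{R}:=\bigoplus_{[D]\in\Pic(X)}\Osh_X(D)$ (writing $\Pic(X)$ additively and fixing a divisor in each class, so that $\Gamma(X,\mathcal{R})=\Cox(X)$ on the nose) and set $\widehat{X}:=\underline{\Sp}_X(\mathcal{R})$, the relative spectrum, with its affine structure morphism $\pi\colon\widehat{X}\map X$. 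The $\Pic(X)$-grading equips $\widehat{X}$ with a fibrewise $G$-action over $X$, and over any open $U\subseteq X$ that simultaneously trivializes the line bundles $\Osh_X(L_i)$ one has $\mathcal{R}|_U\cong\Osh_U[\zz^n]$, so $\widehat{X}|_U\cong U\times G$. Thus $\pi$ is a Zariski-locally trivial $G$-torsor; in particular the action is free with closed orbits, so $\pi$ is a geometric quotient, and it is a \emph{good} quotient since $\pi$ is affine with $(\pi_*\Osh_{\widehat{X}})^G=\mathcal{R}_0=\Osh_X$.

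Next I would relate $\widehat{X}$ to $\Sp(\Cox(X))$. Taking global sections, $\Gamma(\widehat{X},\Osh_{\widehat{X}})=\Gamma(X,\mathcal{R})=\Cox(X)$, so the identity on global sections induces a canonical $G$-equivariant morphism $j\colon\widehat{X}\map\Sp(\Cox(X))$; the claim is that $j$ is an open immersion onto $\Sp(\Cox(X))\setminus V(J_L)$, the complement of the non-semistable locus described above (note $V(J_L)=V(\Rsec_L)$). That $j$ lands in that complement is where ampleness of $L$ is used: given $\widehat{x}\in\widehat{X}$ over $x\in X$, choose $m\geq1$ and $s\in H^0(X,L^{\otimes m})$ with $s(x)\neq0$; regarded as a homogeneous element of $\Cox(X)$ of degree $m[L]$, this $s$ is a section of the summand $\Osh_X(mL)\subseteq\mathcal{R}$ that does not vanish at $\widehat{x}$, so $j(\widehat{x})$ lies in the principal open $\Sp(\Cox(X))\setminus V(s)\subseteq\Sp(\Cox(X))\setminus V(\Rsec_L)$.

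For the reverse inclusion and the open-immersion property I would work over the affine cover of $X$ by the nonvanishing loci $X_s$ of such sections $s$; these cover $X$ by ampleness, and a finite subcover exists since $X$ is quasi-compact. Because $X\setminus X_s$ is the support of an effective divisor and $X$ is normal, every section of $\Osh_X(D)$ over $X_s$ has the shape $t/s^k$ with $t\in H^0(X,\Osh_X(D+kmL))$; summing over the classes $[D]$ this gives $\Gamma(X_s,\mathcal{R})=\Cox(X)[1/s]$, hence $\pi^{-1}(X_s)=\underline{\Sp}\big(\Cox(X)[1/s]\big)$ and $j$ restricts there to the standard open immersion onto $\Sp(\Cox(X))\setminus V(s)$. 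These local isomorphisms are compatible on overlaps, so $j$ is an open immersion with image $\bigcup_s\big(\Sp(\Cox(X))\setminus V(s)\big)=\Sp(\Cox(X))\setminus V(\Rsec_L)=\Sp(\Cox(X))\setminus V(J_L)$. Transporting the structure of the first paragraph across the $G$-equivariant isomorphism $j$ then exhibits $X\cong\widehat{X}/G$ as a good geometric quotient of $\Sp(\Cox(X))\setminus V(J_L)$ by $G=\Hom(\Pic(X),k^*)$, which is the assertion.

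The step I expect to be the main obstacle is the cover computation in the third paragraph: establishing the localization formula $\Gamma(X_s,\mathcal{R})=\Cox(X)[1/s]$ --- the ``$t/s^k$'' description of sections over $X_s$, for which normality of $X$ is essential --- and checking that the resulting identifications $\pi^{-1}(X_s)\cong\Sp(\Cox(X))\setminus V(s)$ glue to a global open immersion, where the clean route is to verify that $j$ is \'etale and universally injective. Everything should be kept scheme-theoretic, since $\Cox(X)$ is only assumed to be finitely generated. (Hu and Keel instead deduce the statement from an embedding of $X$ into a toric variety $W$ whose Cox ring is a polynomial ring, which reduces the computation above to the already-known toric quotient presentation of $W$; the intrinsic torsor argument sketched here bypasses that embedding and is perhaps closer in spirit to the role such embeddings play in this paper.)
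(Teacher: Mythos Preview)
The paper does not supply its own proof of this theorem: it is quoted as a known result with citations to \cite{hk} and \cite{lv}, and no argument is given in the text. So there is nothing in the paper to compare your proposal against line by line.

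That said, your proposal is the standard intrinsic argument and is correct in outline. The torsor $\widehat{X}=\underline{\Sp}_X(\mathcal{R})\to X$ is exactly the ``universal torsor'' construction, and identifying it with the semistable locus in $\Sp(\Cox(X))$ via the localization $\Gamma(X_s,\mathcal{R})=\Cox(X)[1/s]$ is the heart of the matter; you correctly flag that this step uses ampleness of $L$ (so that the $X_s$ are affine and cover $X$) together with normality of $X$ (so that sections over $X_s$ are of the form $t/s^k$). One small caveat worth making explicit: to get $\mathcal{R}$ as an honest sheaf of $\Osh_X$-algebras, rather than just a graded $\Osh_X$-module, you need the chosen divisors to be compatible under addition up to the identifications $\Osh_X(D)\otimes\Osh_X(D')\cong\Osh_X(D+D')$; with $\Pic(X)$ free this is arranged by fixing divisors on a basis and extending $\zz$-linearly, which your parenthetical already gestures at.

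As you note yourself, the approach in \cite{hk} is different: Hu and Keel embed $X$ into a simplicial toric variety $W$ with $\Cox(W)$ the polynomial ring presenting $\Cox(X)$, and then read off the quotient description of $X$ from Cox's quotient description of $W$. Your route avoids the auxiliary toric variety at the cost of the localization computation; the toric route trades that computation for the (also nontrivial) construction of the embedding $X\hookrightarrow W$. Both are standard and both are fine.
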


As we vary the line bundle across the different top-dimensional GIT chambers of $X$, we get varieties with the same Cox ring, which are isomorphisms in codimension one, such as flips.  Thus, we can recover $X$ and its flips by understanding $\Cox{(X)}$ and $\Pic{(X)}$.  An interesting consequence of this is that we can identify embeddings of $X$ that respect the chamber structure.

\begin{definition} Suppose $X$ and $Y$ are Mori dream spaces.  An embedding $ X\subset Y$ is called a \textbf{Mori embedding} if it satisfies the following:

    \begin{enumerate}
        \item The restriction of $\Pic{(W)}_{\qq}$ to $\Pic{(X)}_{\qq}$ is an isomorphism.
        \item There is an induced morphism $\overline{\NE}^1(W) \rightarrow \overline{\NE}^1(X)$.
        \item Every Mori chamber of $X$ is a finite union of Mori chambers of $W$.
        \item Given a contraction $f:X\rmap X'$, there is a rational contraction $\hat{f}:W\rmap X'$ which restricts to $f$.
    
    \end{enumerate}

\end{definition}

Nontrivial such embeddings always exist for non-toric Mori dream spaces:

\begin{propo} ~\cite{hk} Given a Mori dream space $X$, there exists quasi-smooth projective toric variety $W$ and a Mori embedding $X\subset W$.

\end{propo}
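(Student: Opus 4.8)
The plan is to construct $W$ directly from a presentation of the Cox ring, mimicking the Hu–Keel construction and then verifying the four axioms of a Mori embedding. First I would fix generators $f_1,\dots,f_N$ of $\Cox(X)$ that are homogeneous with respect to the $\Pic(X)$-grading, with $\deg f_j = a_j \in \Pic(X)$. This gives a surjection $k[x_1,\dots,x_N] \twoheadrightarrow \Cox(X)$, hence a closed embedding $\Sp(\Cox(X)) \hookrightarrow \af^N$ that is equivariant for the torus $T = \Hom(\Pic(X), k^*)$ acting on $\af^N$ through the characters $a_j$. The candidate ambient space is $W := (\af^N \setminus Z) / T$ for a suitable closed $T$-invariant set $Z \subset \af^N$; this is a toric variety whose fan I would describe via the GIT/Gelfand–Kapranov–Zelevinsky chamber decomposition of the cone generated by the $a_j$ inside $\Pic(X)_\qq$. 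One chooses $Z$ (equivalently, a stability chamber) so that on the one hand $W$ is quasi-smooth projective — this is where one needs the $a_j$ to generate a full-dimensional cone and may require adding extra generators, e.g. the coordinates themselves, or passing to a simplicial subdivision — and on the other hand the chamber chosen for $W$ is contained in an ample chamber of $X$, so that $X = (\Sp(\Cox(X)) \setminus J_L)/T$ sits inside $W$ as the closed subscheme cut out by the ideal of relations. This uses Theorem~1.6 for the quotient description of $X$.

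Next I would verify the axioms one at a time. Axiom~(1): by construction $\Pic(W)_\qq \cong \Pic(X)_\qq$, since both are computed as the character lattice $\Pic(X)$ modulo nothing — the quotient presentation gives $\Cl(W) = \Pic(X)$, and restriction to $X$ is the identity on this lattice by the way the grading was set up; $W$ being quasi-smooth gives $\Pic(W)_\qq = \Cl(W)_\qq$. Axiom~(2): the pseudoeffective cone $\overline{\NE}^1(W)$ is the cone spanned by the $a_j$ (the toric pseudoeffective cone is generated by the torus-invariant divisors), and this cone surjects onto $\overline{\NE}^1(X)$ because every effective class on $X$ lifts to an effective class on $W$ — indeed sections of line bundles on $X$ are exactly the degree pieces of $\Cox(X)$, which are quotients of the corresponding pieces of $k[x_1,\dots,x_N]$. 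For axioms~(3) and~(4): the Mori chambers of $W$ are exactly the GKZ chambers of the cone on the $a_j$, and the Mori chambers of $X$ are unions of GIT chambers which, by Hu–Keel, are cut out in $\Pic(X)_\qq$ by the same kind of wall-and-chamber data coming from $\Cox(X)$; the walls of $X$ are a subset of (the images of) the walls of $W$, giving~(3). A contraction $f : X \rmap X'$ corresponds to a face of $\overline{\NE}^1(X)$, hence to a GIT chamber of $X$; choosing a GKZ chamber of $W$ inside it produces a toric rational contraction $\hat f : W \rmap W'$, and $X'$ embeds in $W'$ compatibly, giving~(4).

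The main obstacle will be axiom~(4) together with the quasi-smoothness requirement: one must choose a single toric variety $W$ (equivalently one fan, one stability chamber) whose GKZ chamber structure simultaneously refines \emph{all} the Mori chambers of $X$ and for which every contraction of $X$ extends. The danger is that making $W$ quasi-smooth by subdividing the fan could destroy the property that $W$'s chambers sit correctly over those of $X$, or could create a toric variety that is no longer projective. I would handle this by first producing a (possibly singular) projective toric $W_0$ with the correct chamber combinatorics from the GKZ construction, then taking a projective simplicial (hence quasi-smooth) refinement $W \to W_0$ that is an isomorphism in codimension one over the relevant locus; since such a refinement does not change $\Pic_\qq$, the pseudoeffective cone, or the big chamber walls, all four axioms are preserved. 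A secondary subtlety is checking that the embedding $X \subset W$ is genuinely a \emph{closed} embedding and not merely a rational map — this follows because $L$ ample on $X$ lies in the interior of the chosen chamber, so the unstable locus $J_L$ of $X$ is the restriction of the unstable locus $Z$ of $W$, and the GIT quotients are compatible. I do not expect the verification of axioms (1)–(3) to present difficulties beyond bookkeeping.
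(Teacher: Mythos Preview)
The paper does not give its own proof of this proposition: it is stated with a citation to Hu--Keel \cite{hk} and no argument follows. So there is no in-paper proof to compare your proposal against. What the paper \emph{does} do is carry out exactly the construction you describe in the concrete case $X=X_4$ in Section~3: it fixes the ten generators of $\Cox(X_4)$, writes down the surjection from a polynomial ring in ten variables, reads off the torus action from the $\Pic(X_4)$-grading, computes the irrelevant ideal $J_D$ for a specific ample $D$, and identifies the resulting simplicial toric variety $W$. Your sketch is a faithful abstraction of that procedure and of the original Hu--Keel argument, so the approach is correct.

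One remark: you spend effort worrying about producing a possibly singular $W_0$ and then passing to a quasi-smooth simplicial refinement. In the actual construction this step is unnecessary. If you choose the linearization $L$ to lie in the interior of a top-dimensional GIT chamber (and an ample $L$ on $X$ does), the resulting GIT quotient $(\af^N\setminus Z_L)/T$ is already simplicial, hence quasi-smooth and projective, with no refinement needed. The paper's example makes this explicit: once $D$ is chosen ample, the 42 monomials in $J_D$ directly cut out a simplicial fan. So your ``main obstacle'' is not really an obstacle, and the argument is cleaner than you anticipate.
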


    Lastly, we can give alternate criteria for determining when an embedding is a Mori embedding.
    
\begin{teo} \label{prop:embed} Suppose $X$ and $W$ are Mori Dream Spaces and
$\Phi:X\rightarrow W$ is an embedding such that
    \begin{enumerate}
        \item the restriction of $\Pic{(W)}_{\qq}$ to
        $\Pic{(X)}_{\qq}$ is an isomorphism, and
        \item the induced map $\Phi^*:\Cox{(W)}\rightarrow \Cox{(X)}$ is surjective.
    \end{enumerate}
Then $\Phi$ is a Mori embedding.
\end{teo}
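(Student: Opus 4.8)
The plan is to verify the four defining conditions of a Mori embedding one at a time, using hypothesis (1) to identify $\Pic(W)_\qq$ with $\Pic(X)_\qq$ --- so that all the cones in sight live in one real vector space --- and hypothesis (2) to move information between $\Cox(W)$ and $\Cox(X)$. The first condition is hypothesis (1) itself. From surjectivity of $\Phi^*$ I would extract two things: first, $\Cox(X)\cong\Cox(W)/I$ for the homogeneous ideal $I=\ker\Phi^*$, so that $\Sp(\Cox(X))$ is a closed subscheme of $\Sp(\Cox(W))$ stable under the torus actions (which agree under (1)); and second, in each degree the restriction of sections $H^0(W,L)\to H^0(X,L|_X)$ is surjective. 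The latter shows that $L|_X$ effective on $X$ forces $H^0(W,L)\neq 0$, so $\overline{\NE}^1(X)\subseteq\overline{\NE}^1(W)$ under the identification; together with the linear isomorphism of (1) this is the comparison of pseudo-effective cones wanted in the second condition, and in fact one expects equality here, since the effective cone of a Mori dream space is spanned by the degrees of a set of generators of its Cox ring and $\Phi^*$ carries such a set for $W$ onto one for $X$. The surjection on sections also yields, for each $L$, a graded surjection of section rings $\Rsec_{L}\twoheadrightarrow\Rsec_{L|_X}$, hence a closed immersion $\Proj(\Rsec_{L|_X})\hookrightarrow\Proj(\Rsec_{L})$ through which $f_{L}\colon W\rmap\Proj(\Rsec_{L})$ restricts, along $X\subset W$, to $f_{L|_X}$ on $X$; this compatibility feeds conditions three and four.

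For the third condition I would argue by variation of GIT. The Mori chambers of a Mori dream space are its GIT chambers (\cite{hk}; see also \cite{lv}): classes of linearizations $\chi$ in the rational Picard group giving the same GIT quotient of the spectrum of the Cox ring by the acting torus. Surjectivity of $\Cox(W)_{m\chi}\to\Cox(X)_{m\chi}$ means every invariant section of a power of $\chi$ on the closed subscheme $\Sp(\Cox(X))$ extends to $\Sp(\Cox(W))$ (and sections restrict the other way), so the $\chi$-semistable locus of $\Sp(\Cox(X))$ is cut out on it by that of $\Sp(\Cox(W))$. Hence two linearizations in one GIT chamber of $W$ give the same semistable locus, and the same quotient, on $\Sp(\Cox(X))$, so they lie in one GIT chamber of $X$: the chamber decomposition of $W$ refines that of $X$ on $\overline{\NE}^1(X)$. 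Since both are finite and $\overline{\NE}^1(X)=\overline{\NE}^1(W)$, each Mori chamber of $X$ is a finite union of Mori chambers of $W$.

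For the fourth condition, let $f\colon X\rmap X'$ be a contraction. Since $X$ is a Mori dream space, $f=f_{L|_X}$ with $X'\cong\Proj(\Rsec_{L|_X})$ for some divisor $L$ on $W$ lifted via (1); it is effective by $\overline{\NE}^1(X)\subseteq\overline{\NE}^1(W)$, so $f_{L}\colon W\rmap\Proj(\Rsec_{L})$ is defined, and it is a rational contraction because $W$ is a Mori dream space. By the compatibility noted above, $\hat f:=f_{L}$ restricts along $X\subset W$ to $f$, its target $X'=\Proj(\Rsec_{L|_X})$ sitting in $\Proj(\Rsec_{L})$ as a closed subscheme. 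With all four conditions checked, $\Phi$ is a Mori embedding.

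The step I expect to be the real obstacle is the fourth: one has to confirm that $\hat f=f_{L}$ is genuinely a \emph{rational contraction} and not merely a rational map --- this is exactly where it matters that $W$, not only $X$, is a Mori dream space, so that every divisor class on $W$ gives a rational contraction to the $\Proj$ of its section ring --- and that ``$\hat f$ restricts to $f$'' holds literally, i.e.\ that the indeterminacy locus and image of $f_{L}$ are compatible with the embedding rather than only its generic behaviour. Two smaller points needing care are torsion in the Picard groups (hypothesis (1) is an isomorphism only after $\otimes\,\qq$, while the Cox rings are graded by the integral groups, so the identifications should be performed at the level the cones and chambers actually see) and the claim $\overline{\NE}^1(X)=\overline{\NE}^1(W)$, i.e.\ that $\Phi^*$ cannot kill a generator of $\Cox(W)$ whose class is extremal in $\overline{\NE}^1(W)$; without this the ``finite union of Mori chambers of $W$'' in condition three would have to be read modulo intersecting with $\overline{\NE}^1(X)$.
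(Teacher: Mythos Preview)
Your proposal is correct and follows essentially the same route as the paper: both arguments use hypothesis~(1) to identify the Picard groups, extract from~(2) the degreewise surjectivity $H^0(W,L)\twoheadrightarrow H^0(X,L|_X)$ to compare the pseudo-effective cones, and then use the resulting closed immersions $\Proj(\Rsec_{L|_X})\hookrightarrow\Proj(\Rsec_L)$ to verify both the chamber refinement and the restriction of contractions.

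The one stylistic difference worth noting is in the chamber argument. The paper works directly from the definition of Mori equivalence: it writes down the commuting square
\[
\begin{array}{ccc}
W & \dashrightarrow & \Proj(R(W,D)) \\
\uparrow & & \uparrow \\
X & \dashrightarrow & \Proj(R(X,D_X))
\end{array}
\]
and observes that if $D$ and $D'$ are Mori equivalent on $W$ then the right-hand targets are isomorphic, and since the vertical closed immersions are both induced by restriction of sections, the models $\Proj(R(X,D_X))$ and $\Proj(R(X,D'_X))$ agree up to isomorphism. You instead invoke the identification of Mori chambers with GIT chambers and argue via semistable loci on $\Sp(\Cox(W))$ and its closed subscheme $\Sp(\Cox(X))$. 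These are the same argument in different clothing---your GIT phrasing is a bit heavier in prerequisites but makes the ``semistable loci restrict'' step more transparent, while the paper's diagram is more elementary and self-contained. Your final caveats (rational contraction vs.\ rational map, torsion, equality of the two effective cones) are all legitimate points of care, but none of them is an actual obstruction here; the paper dispatches the fourth condition in a single sentence by appealing to the chamber containment just proved and the Mori dream space hypothesis on $W$.
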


\begin{proof} First, we show the existence of an isomorphism
$\Neo{(W)}\rightarrow \Neo{(X)}$ after restriction.  Injectivity is an
immediate consequence of (1). To show surjectivity, suppose $D$ is a
representative of a class in $\Neo{(X)}$.  By definition, there
exists some $m$ large enough such that $H^{0}(X,mD)\neq 0$.  Since
the induced map between Cox rings is surjective by (2), there must
exist a $D_W$ on $W$ which restricts to $D$ and such that
$H^{0}(W,kD_W)\neq 0$ for some large enough $k$.  Thus, the class of
$D_W$ is in $\Neo{(W)}$.

    Second, we show that a Mori chamber of $X$ is a finite union of
Mori chambers of $W$.  By definition of a Mori Dream Space, there
are only finitely many chambers for $W$, thus it suffices to show
that any chamber for $W$ is contained in a chamber for $X$.  Suppose
that $D$ is an effective divisor in a Mori chamber $C$ of $W$. Since
its restriction $D_X$ is effective, finite generation of the
corresponding section rings implies the following diagram commutes,
where vertical arrows represent inclusions:

\begin{center}
\begin{tikzpicture}[scale=.8]
\draw (0,0) node (a){$W$};
\draw (4,0) node (b){$\Proj{(R(W,D))}$};
\draw (0,-2) node (c){$X$};
\draw (4,-2) node (d){$\Proj{(R(X,D_X))}$};
\draw [->, dashed] (a)--(b);
\draw [->] (c)--(a);
\draw [->, dashed] (c)--(d);
\draw [->] (d)--(b);
\end{tikzpicture}
\end{center}

Note that (2) implies that the rightmost vertical arrow is a closed
embedding. Now consider an effective divisor $D^{\prime}$ on $W$,
Mori equivalent to $D$, and restricting to $D^{\prime}_X$. Since by
definition $\Proj{(R(W,D))}\cong \Proj{(R(W,D^{\prime}))}$, and the
closed embeddings given by the right-hand side of the diagram are
induced by the restriction of global sections of the corresponding
section rings, then up to isomorphism the models given by $D_X$ and
$D^{\prime}_X$ are the same.

    Lastly, the restriction of contractions property of a Mori
embedding is an immediate consequence of the definition of Mori
Dream spaces, and the containment property of Mori Cones above.

\end{proof}

\section{Example}

In this section we construct a nontrivial example of a Mori embedding.  We consider the case of embedding the simplest smooth, non-toric del Pezzo surface in a toric variety found by considering a presentation of the Cox ring of the surface.  \\

    Fixing notation, let $\pi: X_4 \rightarrow \pp^2$ be the
blow-up of four points in general linear position, and denote by
$h:=\pi^*l$ the pullback of a line, and by $l_i$ the exceptional
divisor corresponding to a blown up point.  Recall that these
divisors form a basis for $\Pic{(X_4)}=\zz^5$.  In ~\cite{bp} and ~\cite{ct} the generators for the Cox rings of smooth del Pezzo surfaces are given.  With this choice of basis the
generators of $\Cox{(X_4)}$ are given by the the ten exceptional
divisors of the forms $l_i$ and $h-l_i-l_j$ for $i\neq j$. Let $R$
be the polynomial ring with 10 variables
$\cc [x_1,...,x_{10}]$, and consider the surjection $R
\rightarrow \Cox{(X_4)}$ given by $x_k\mapsto g_k$, where each $g_k$
corresponds to one of the ten exceptional curves as follows:
$$g_1:=h-l_1-l_2,\text{ }g_2:=h-l_1-l_3,\text{ } g_3:=h-l_1-l_4,$$ $$g_4:=h-l_2-l_3,\text{ }
g_5:=h-l_2-l_4,\text{ } g_6:=h-l_3-l_4,$$ $$g_7:=l_1,\text{ }
g_8:=l_2, \text{ }g_9:=l_3, \text{ and }g_{10}:=l_4.$$

Since $R$ is polynomial, it is the Cox ring of a toric variety $W$ ~\cite{COX}, ~\cite{hk}.

    To determine $W$ explicitly, we construct its 1-skeleton $\Delta^1(W)$ by
considering a torus action on $\Sp{(R)}$ compatible with the torus
action on $\Cox{(X_4)}$ which gives us $X_4$ as a GIT quotient.  This
torus is given by $G:=\Hom{(A_1(X_4),\cc^*)}$, noting that the
Chow group and Picard group are the same in this case.  Since $R$ is
a polynomial ring in 10 variables, $\Sp{(R)}=\cc^{10}$, and
$\Delta^1(\Sp{(R)})$ is given by the 10 standard coordinate vectors
in $\rr^{10}$.  Taking the GIT quotient $\Sp{(R)}//G$ imposes
linear relations on the torus-invariant divisors of $\Sp{(R)}$.
These are given by the weights of the natural torus action of
multiplication specified by $\Pic{(X_4)}$.  The images of the
$T$-invariant divisors of $\Sp{(R)}$ under this action form the
1-skeleton for $W:=\Sp{(R)}//G$.

    Let $N\simeq \zz^5$ be the lattice containing the fan of $W$, and $M=\Hom{(N,\zz)} \simeq \zz^5$ be its dual lattice. Since $X_4$ is a Mori Dream Space, we have
$\Aone{(X_4)}=\Aone{(W)}$, and hence the compatible torus action can be
understood by considering the exact sequence of abelian groups:
\[0\rightarrow M\rightarrow \zz^{\Delta(1)}\rightarrow \Aone{(W)}\rightarrow 0\]

\noindent where $\zz^{\Delta(1)}=\zz^{10}$ has a canonical basis given by the ten
exceptional divisors $E_1,E_2, \dots ,E_{10}$. Thus, $\zz^{10} \rightarrow \Aone{(W)}
$ is represented by the matrix  
$$ \left(
           \begin{array}{rrrrrrrrrr}
             1 & 1 & 1 & 1 & 1 & 1 & 0 & 0 & 0 & 0 \\
             -1 & -1 & -1 & 0 & 0 & 0 & 1 & 0 & 0 & 0 \\
             -1 & 0 & 0 & -1 & -1 & 0 & 0 & 1 & 0 & 0 \\
             0 & -1 & 0 & -1 & 0 & -1 & 0 & 0 & 1 & 0 \\
             0 & 0 & -1 & 0 & -1 & -1 & 0 & 0 & 0 & 1 \\
                       \end{array}
         \right).$$

Computing the kernel of this matrix gives us the columns of the matrix $A$ representing $M\rightarrow \zz^{\Delta(1)}$. Taking the $\zz$-dual of the above sequence describes the torus
action on lattices via:

\[0\rightarrow \Aone{(W)}^{*} \rightarrow \zz^{10}\rightarrow N \rightarrow 0\]

To work on the level of fans we tensor the above sequence by
$\rr$, giving us maps between real vector spaces where the
injective map $f:\rr^5\rightarrow \rr^{10}$ given in
the exact sequence is still represented by 

$$A^T= \left(
          \begin{array}{rrrrrrrrrr}
            1 & 0 & 0 & 0 & 0 & -1 & 1 & 1 & -1 & -1 \\
            0 & 1 & 0 & 0 & 0 & -1 & 1 & 0 & 0 & -1 \\
            0 & 0 & 1 & 0 & 0 & -1 & 1 & 0 & -1 & 0 \\
            0 & 0 & 0 & 1 & 0 & -1 & 0 & 1 & 0 & -1 \\
            0 & 0 & 0 & 0 & 1 & -1 & 0 & 1 & -1 & 0 \\
          \end{array}
        \right).$$

Each column above represents the coordinates of a $T$-invariant
divisor on $W$. Unfortunately, the 1-skeleton is not sufficient to
describe $\Delta(W)$; Cox rings do not uniquely determine the fan of
a variety. To proceed, we select a natural toric variety with this
1-skeleton, and show that we can obtain a Mori embedding for this
choice.

Following Example 2.11 of ~\cite{lv}, consider the ample divisor $D=11h-5l_1-3l_2-2l_3-l_4$ on $X_4$.  We find the irrelevant ideal $J_D$ by taking the radical of the monomial ideal generated by a basis of $\Cox{(X_4)}_D$. This can be done in Macaulay 2:\\

{\small \begin{verbatim}
i1 :S=QQ[a,b,c,d,e,f,x,y,z,w,Degrees=>{{1,-1,-1,0,0},{1,-1,0,-1,0},
{1,-1,0,0,-1},{1,0,-1,-1,0},{1,0,-1,0,-1},{1,0,0,-1,-1},{0,1,0,0,0},
{0,0,1,0,0},{0,0,0,1,0},{0,0,0,0,1}},Heft=>{3,1,1,1,1}]

i2 : radical monomialIdeal basis({11,-5,-3,-2,-1},S)
\end{verbatim} }

This gives us 42 monomials, each consisting of 5 variables out of the 10.  These relations determine the cone structure of a $\qq$-factorial toric variety $W$ with the above 1-skeleton.  Since all the cones must be simplicial in this case, we conclude from the fan structure that $W$ must be $\pp^5$ blown up along four $T$-invariant copies of $\pp^2$.

\begin{remark}
Choosing $D$ to be the anti-canonical divisor, we can modify the above script to get a different set of cones.  We can check the combinatorics in Magma via:

{\small \begin{verbatim}
R<a,b,c,d,e,f,x,y,z,w> := PolynomialRing(Rationals(),10);
I := [ ideal<R|a*b*c*x, a*d*e*y, a*c*d*x*y, a*b*e*x*y, a*f*x*y, b*d*f*z, 
       b*c*d*x*z, b*e*x*z, a*b*f*x*z, c*d*y*z, b*d*e*y*z, a*d*f*y*z, c*e*f*w,
        c*d*x*w, b*c*e*x*w, a*c*f*x*w, b*e*y*w, c*d*e*y*w, a*e*f*y*w, a*f*z*w,
        c*d*f*z*w, b*e*f*z*w>];
Z := [[1,1,1,1,1,1,0,0,0,0],[-1,-1,-1,0,0,0,1,0,0,0],[-1,0,0,-1,-1,0,0,1,0,0],
[0,-1,0,-1,0,-1,0,0,1,0],[0,0,-1,0,-1,-1,0,0,0,1] ];
Q := [];
C := CoxRing(R,I,Z,Q);
X := ToricVariety(C);
IsQFactorial(X);
IsProjective(X);
IsComplete(X);
\end{verbatim} }

This gives us a projective toric variety with the same 1-skeleton as before, but has 22 cones in its fan, and is not $\qq$-factorial.   Since projectivity  depends solely on the 1-skeleton, this shows that all the toric varieties with the same Cox ring as $W$ are projective, correcting the example in ~\cite{lv}.

\end{remark}

    Let $x_0,...,x_5$ be coordinates on $\pp^5$.  Under the
standard torus action of scalar multiplication, the $T$-invariant
planes are given by those planes which have three nonzero
coordinates. Without loss of generality consider:

\begin{itemize}

 \item[] \begin{center}$\Sigma_1=\{x_0=x_3=x_5=0\}$\end{center}
 \item[] \begin{center}$\Sigma_2=\{x_0=x_2=x_4=0\}$\end{center}
 \item[] \begin{center}$\Sigma_3=\{x_1=x_2=x_3=0\}$\end{center}
 \item[] \begin{center}$\Sigma_4=\{x_1=x_4=x_5=0\}$\end{center}

\end{itemize}

\begin{lemma} \label{lemma:plane4} There exists a 2-plane in $\pp^5$ that intersects
the four 2-planes $\Sigma_i$ each in a point not lying on the
others.
\end{lemma}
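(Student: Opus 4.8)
The plan is to produce the plane explicitly, after first repackaging the four coordinate planes combinatorially. A direct inspection of the four displayed equations shows that $\Sigma_i$ is the coordinate plane supported on a $3$-element set of coordinates $S_i\subset\{0,\dots,5\}$, that the six pairwise intersections $S_i\cap S_j$ are six distinct singletons, and that together they exhaust $\{0,\dots,5\}$. Consequently one may relabel the homogeneous coordinates $x_0,\dots,x_5$ by the six two-element subsets $\{i,j\}\subseteq\{1,2,3,4\}$: writing $e_{\{i,j\}}$ for the corresponding basis vector of $\cc^6$, this relabeling can be arranged so that $\Sigma_i=\langle\,e_{\{i,j\}}:j\neq i\,\rangle$ for each $i$. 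I would open the proof with this reduction, which is a routine check against the four equations.

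With the coordinates so labeled, set $\epsilon_{ij}=1$ for $i<j$ and $\epsilon_{ij}=-1$ for $i>j$, and define $p_i:=\sum_{j\neq i}\epsilon_{ij}\,e_{\{i,j\}}$ for $i=1,2,3,4$; note that $p_i\in\Sigma_i$. Since $\epsilon_{ij}+\epsilon_{ji}=0$ one gets $p_1+p_2+p_3+p_4=0$, while the basis vectors $e_{\{1,4\}},e_{\{2,4\}},e_{\{3,4\}}$ each occur in exactly one of $p_1,p_2,p_3$ with coefficient $\pm1$, so $p_1,p_2,p_3$ are linearly independent. Hence $P:=\langle p_1,p_2,p_3\rangle$ is a genuine $2$-plane in $\pp^5$, on which $p_4=-(p_1+p_2+p_3)$ also lies.

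The substantive point is that $P$ meets each $\Sigma_i$ in exactly a point, not a line, and this follows from one uniform computation. Any $v\in P$ can be written $v=\sum_{k=1}^4\alpha_k p_k$, and the coordinate of $v$ along $e_{\{j,k\}}$ equals $\pm(\alpha_j-\alpha_k)$; hence $v\in\Sigma_i$ forces $\alpha_j=\alpha_k$ for all $j,k\neq i$, say all equal to $c$, and then $v=\alpha_i p_i+c\sum_{j\neq i}p_j=(\alpha_i-c)\,p_i$. Thus $P\cap\Sigma_i=\langle p_i\rangle$ is the single point $[p_i]$ for every $i$. Finally, for $j\neq i$ pick $k\in\{1,2,3,4\}\setminus\{i,j\}$: then $p_i$ has nonzero coordinate along $e_{\{i,k\}}$, which is not among the generators of $\Sigma_j$, so $[p_i]\notin\Sigma_j$; in particular the four points are distinct. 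This finishes the verification.

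I expect the only real obstacle to be the temptation to argue by genericity rather than explicitly: the incidence conditions ``$P$ meets $\Sigma_i$'' are each codimension one on the $9$-dimensional Grassmannian $G(3,6)$, so the locus of admissible planes has expected dimension $5$ and one could invoke $\sigma_1^4\neq0$ for nonemptiness — but extracting the ``not on the other $\Sigma_j$'' clause from such an argument still requires an extra general-position step, so the explicit plane above is the cleaner route. Once one spots the pairing of coordinates with index-pairs and the antisymmetric vectors $p_i$, the remainder is the short linear algebra sketched here.
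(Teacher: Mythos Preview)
Your argument is correct and genuinely different from the paper's. The paper proves the lemma by a dimension count on the Grassmannian $\mathbf{G}(2,5)$: the locus $\Gamma$ of planes meeting all four $\Sigma_i$ is nonempty of dimension at least $5$, and a case analysis (planes meeting some $\Sigma_i$ in a line; planes for which two of the intersection points coincide) bounds the ``bad'' sublocus by dimension $4$, so a generic member of $\Gamma$ works. Only \emph{after} the proof does the paper exhibit a concrete plane, by writing down three linear equations and checking the four intersection points by hand. Your route instead exploits the combinatorial structure directly: relabeling the coordinates by the six $2$-subsets of $\{1,2,3,4\}$ so that $\Sigma_i=\langle e_{\{i,j\}}:j\neq i\rangle$, the antisymmetric vectors $p_i=\sum_{j\neq i}\epsilon_{ij}e_{\{i,j\}}$ satisfy $\sum p_i=0$ with any three independent, and the computation of the $e_{\{j,k\}}$-coordinate of $\sum\alpha_kp_k$ as $\pm(\alpha_j-\alpha_k)$ gives $P\cap\Sigma_i=\{[p_i]\}$ in one stroke. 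The paper's approach buys the extra information that the admissible planes form a $5$-dimensional family (relevant since the proposition following the lemma speaks of a ``general'' plane), while yours is shorter, avoids the case analysis entirely, and makes the $S_4$-symmetry of the configuration transparent; indeed your plane is essentially the image of the Pl\"ucker-type map $\cc^4\to\bigwedge^2\cc^4$.
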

\begin{proof}
Consider the subvariety of the Grassmannian $\textbf{G}(2,5)$,
$\Gamma=\{P\in\textbf{G}(2,5) :P\cap \Sigma_i \neq \emptyset,
\forall i\}$.  Since $\dim{\textbf{G}(2,5)}=9$ and each intersection
with a $\Sigma_i$ imposes one condition, then as long as $\Gamma$ is
nonempty, $\dim{\Gamma} \geq 5$.  But $\Gamma \neq \emptyset$, as
any plane containing the line through $\Sigma_1\cap\Sigma_2$ and
$\Sigma_3\cap\Sigma_4$ is in this variety.

Next, let $U$ be the subset of $\Gamma$ containing planes of the
desired type.  To show this is a non-empty, open subvariety of
$\Gamma$, we claim that its complement is a finite union of closed
subvarieties of dimension no greater than four.  There are two
general cases to consider:

\begin{itemize}

    \item \textit{Planes $P$ which intersect $\Sigma_i$ in a line $l$ (for a fixed
    $i$.)}  Note $P$ is given by $l$ and a point $p$ not on this line.
    Now no more than one point of any $\Sigma_j$, $j \neq i$, can be on $l$, else
    $l\subset \Sigma_i \cap \Sigma_j$.  Hence, $p$ has at most two degrees
    of freedom to vary over $\Sigma_j$, and $l$ has two degrees to
    vary over $\Sigma_i$.  Since our choice of $j$ is irrelevant to
    this upper bound, the set of all such $P$ can have at most a
    dimension of four.

    \item \textit{Planes $P$ which intersect each $\Sigma_i$ in a point, but
    without loss of generality, $\Sigma_1 \cap P =\Sigma_2 \cap
    P=p$.}  Since $p$ is fixed by our choice of the special planes,
    $P$ is then determined by two other points, $q$ and $r$, such
    that $p,q,$ and $r$ are not collinear. There are two special
    cases to consider:  First, suppose that $q=\Sigma_3 \cap P$, and
    $r=\Sigma_4 \cap P$, such that $p,q,$ and $r$ are not collinear.
    Since there are two degrees of freedom in which to vary each of
    $q$ and $r$, the collection of planes they span has at most
    dimension 4.  Secondly, if $p, q,$ and $r$ are as above, but
    collinear, then $P$ is determined by an additional point,
    not in any of the special planes.  Since the ambient space is
    $\textbf{P}^5$, we have 5-2=3 degrees in which to vary this
    point.  The ways to get lines $\overline{pq}$ which intersect $\Sigma_4$
    is bounded above by the degrees of freedom we have to vary
    $q$ in its plane, i.e. two.  However, not every line $\overline{pq}$ will
    intersect $\Sigma_4$.  Consider a general line $l_3$ in
    $\Sigma_3$. Denote by $Q_l$ the plane spanned by $p$ and
    $l$.  Note $Q_l$ cannot intersect $\Sigma_4$ in more than a
    point, as if it were to intersect in a line, say $l_4$, then
    $l_3 \cap l_4= \Sigma_3 \cap \Sigma_4$.  Whence, for any line
    $l_3$, there is at most one point $q$ such that $\overline{pq}$
    intersects $\Sigma_4$.  Thus, the planes
    spanned by $\overline{pqr}$, and a point outside the planes has at most
    dimension 3+1=4.

\end{itemize}
\end{proof}

In fact, for this example such a plane can be found explicitly. Let
$\Sigma\subset \pp^5 $ be the plane given by the system of
equations:

\begin{itemize}
\item[] \begin{center}$x_2+x_4=0$\end{center}
\item[] \begin{center}$x_0+x_1+x_3=0$\end{center}
\item[] \begin{center}$x_0+x_3+x_5=0$\end{center}
\end{itemize}

One can check that $\Sigma$ intersects the $\Sigma_i$ in the
following points, where each $P_i$ is disjoint from $\Sigma_j$ for
$j \neq i$. Furthermore, by inspection it is easily seen that the
points are in general linear position.

\begin{itemize}

    \item[] \begin{center}$P_1=[0,0,1,0,-1,0]$\end{center}
    \item[] \begin{center}$P_2=[0,1,0,-1,0,1]$\end{center}
    \item[] \begin{center}$P_3=[1,0,0,0,0,-1]$\end{center}
    \item[] \begin{center}$P_4=[1,0,0,-1,0,0]$\end{center}
\end{itemize}

\begin{propo} The map $\Phi : X_4 \rightarrow W$, induced by sending the
base $\pp^2$ of $X_4$ to a general plane in $\pp^5$
intersecting $\Sigma_1,...,\Sigma_4$ in single points, is a Mori
embedding.
\end{propo}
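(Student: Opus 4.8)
The plan is to verify the two hypotheses of Theorem~\ref{prop:embed}, namely that $\Phi$ induces an isomorphism $\Pic(W)_\qq \to \Pic(X_4)_\qq$ and that $\Phi^*:\Cox(W)\to\Cox(X_4)$ is surjective; the Mori embedding property then follows automatically. The construction of $W$ was rigged precisely so that these should hold, so most of the work is in setting up the right identifications rather than in any deep geometry.

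First I would establish the Picard isomorphism. By construction $W$ was built so that $\Aone(W)=\Aone(X_4)$ via the exact sequence $0\to M\to \zz^{\Delta(1)}\to \Aone(W)\to 0$, with the map $\zz^{10}\to\Aone(W)$ given by the displayed $5\times 10$ matrix whose columns are exactly the $\Pic(X_4)$-degrees $g_1,\dots,g_{10}$ of the ten generators of $\Cox(X_4)$. Since $W$ is $\qq$-factorial, $\Pic(W)_\qq = \Aone^1(W)_\qq$, and the claim is that $\Phi^*$ sends the class of the torus-invariant divisor $E_k$ on $W$ to the class of the exceptional curve $g_k$ on $X_4$. This is immediate from the fact that the restriction of the Cox ring variables $x_k$ to $X_4$ are the sections $g_k$, which have the prescribed degrees; concretely, $\Phi$ is the composite $X_4 \subset W$ arising from the inclusion $\Sp(\Cox(X_4)) - J_L \hookrightarrow \Sp(R) - (\text{irrelevant})$ equivariantly for the same torus $G=\Hom(\Aone(X_4),\cc^*)$, so pullback of divisor classes is the identity on $\Aone(X_4)=\Aone(W)$.

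Second, and this is the real point, I would show $\Phi^*:\Cox(W)\to\Cox(X_4)$ is surjective. Here $\Cox(W)=R=\cc[x_1,\dots,x_{10}]$ by construction, and the map $R\to\Cox(X_4)$ is exactly the surjection $x_k\mapsto g_k$ used to define $W$ in the first place, coming from the presentation of $\Cox(X_4)$ by the ten exceptional curves (as in~\cite{bp},~\cite{ct}). So surjectivity is essentially by fiat --- but what needs checking is that the geometrically-defined map $\Phi^*$ induced by the \emph{particular} embedding $X_4\subset W$ (sending $\pp^2$ to the plane $\Sigma$ of Lemma~\ref{lemma:plane4}, equivalently $W=\pp^5$ blown up along $\Sigma_1,\dots,\Sigma_4$) agrees with the algebraic surjection $x_k\mapsto g_k$. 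To see this I would use Lemma~\ref{lemma:plane4}: the chosen plane $\Sigma\subset\pp^5$ meets each $\Sigma_i$ in a single point $P_i$, with the $P_i$ in general linear position, so the strict transform $\tilde\Sigma\subset W$ of $\Sigma$ is $\pp^2$ blown up at the four points $P_1,\dots,P_4$ in general position --- that is, $\tilde\Sigma\cong X_4$ --- and this is the image of $\Phi$. Restricting the toric boundary divisors of $W$ to $\tilde\Sigma$: the four exceptional divisors over the $\Sigma_i$ restrict to the four exceptional curves $l_i$ (this is exactly where ``general linear position'' from Lemma~\ref{lemma:plane4} is used, ensuring the blow-ups don't degenerate), and the six boundary divisors pulled back from the coordinate hyperplanes of $\pp^5$ restrict to the six $(-1)$-curves $h-l_i-l_j$ (one checks which coordinate hyperplanes contain which $P_i$ using the explicit coordinates $P_1,\dots,P_4$, matching the pattern in the defining matrix). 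Thus $\Phi^*E_k$ is the section $g_k$, so on coordinate rings $\Phi^*$ is the surjection $x_k\mapsto g_k$, as desired.

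The main obstacle I anticipate is the bookkeeping in this last step: matching the combinatorial data (which of the ten boundary divisors $E_k$ of $W$ restricts to which of the ten generators $g_k$ of $\Cox(X_4)$) in a way that is genuinely forced by the geometry of the chosen plane $\Sigma$, rather than merely asserted. One must confirm the incidence pattern of the points $P_1,\dots,P_4$ with the six coordinate hyperplanes $\{x_j=0\}$ of $\pp^5$ reproduces the bipartite incidence structure $g_j = h-l_a-l_b$ encoded in the $5\times 10$ matrix, and that the general-position hypothesis of Lemma~\ref{lemma:plane4} is exactly what guarantees the strict transform is a smooth del Pezzo isomorphic to $X_4$ (rather than a blow-up at points in special position). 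Once that dictionary is in place, both hypotheses of Theorem~\ref{prop:embed} are verified and $\Phi$ is a Mori embedding.
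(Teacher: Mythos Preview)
Your proposal is correct and follows the same strategy as the paper: verify the two hypotheses of Theorem~\ref{prop:embed} (Picard isomorphism and surjectivity of $\Phi^*$ on Cox rings) and conclude. The paper's proof is considerably terser than yours---it dispatches both hypotheses with ``by construction'' and cites \cite[II.7.15]{h} for the fact that the embedding of $\pp^2$ into $\pp^5$ lifts to an embedding of blow-ups---while the incidence-pattern bookkeeping you flag as the main obstacle is exactly what the paper carries out \emph{after} the formal proof, when it explicitly matches the ten $T$-invariant divisors of $W$ to the ten exceptional curves of $X_4$.
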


\begin{proof} To start, we show that $\Phi$ is an embedding. First,
by the lemma we know there exists a plane $\Sigma$ in $\pp^5$
intersecting the $\Sigma_i$ in the desired manner. Thus, we have an
embedding of the base $\pp^2$ into the base $\pp^5$.
By ~\cite[II.7.15]{h},
 $\Phi$ is also an embedding. By
construction, $\Pic{(X)}=\Pic{(W)}$, and $\Phi$ induces a surjection
between the finitely generated Cox rings.  Thus, $\Phi: X_4
\rightarrow W$ is a Mori embedding by Theorem \ref{prop:embed}.
\end{proof}

    To further illustrate the geometry of the situation,
we now show explicitly that $\overline{\NE}^1(W)$ restricts to $\overline{\NE}^1(X)$ for
the above embedding. To this end, consider the six $T$-invariant
hyperplanes in $\pp^5$ given by $P_i:=\{x_i=0\}$.  Note that:

\begin{itemize}

    \item[] \begin{center}$P_0\supseteq \Sigma_1,\Sigma_4$\end{center}
    \item[] \begin{center}$P_1\supseteq \Sigma_1,\Sigma_2$\end{center}
    \item[] \begin{center}$P_2\supseteq \Sigma_1,\Sigma_3$\end{center}
    \item[] \begin{center}$P_3\supseteq \Sigma_2,\Sigma_4$\end{center}
    \item[] \begin{center}$P_4\supseteq \Sigma_2,\Sigma_3$\end{center}
    \item[] \begin{center}$P_5\supseteq \Sigma_3,\Sigma_4$\end{center}

\end{itemize}

    Letting $E_i$
denote the exceptional divisors corresponding to the respective
$\Sigma_i$, we have found 10 $T$-invariant divisors:

\begin{itemize}

    \item[] \begin{center}$D_0:=\pi^*P_0-E_1-E_4$\end{center}
    \item[] \begin{center}$D_1:=\pi^*P_1-E_1-E_2$\end{center}
    \item[] \begin{center}$D_2:=\pi^*P_2-E_1-E_3$\end{center}
    \item[] \begin{center}$D_3:=\pi^*P_3-E_2-E_4$\end{center}
    \item[] \begin{center}$D_4:=\pi^*P_4-E_2-E_3$\end{center}
    \item[] \begin{center}$D_5:=\pi^*P_5-E_3-E_4$\end{center}
    \item[] \begin{center}$E_1,E_2,E_3,E_4$\end{center}

\end{itemize}

    The effective cone of divisors of a toric variety is generated
by classes of $T$-invariant divisors, and from $\Delta^1(W)$ we know
there are ten in this case. Now let $d_i$ represent the restriction
of $D_i$ to $X_4$.  Then $d_i=\pi^*l-l_j-l_k$, where $l$ is a line and
$l_j,l_k$ are the exceptional divisors in $X$ corresponding to the
points $p_j,p_k$. The $E_i$ restrict to the $l_i$. Since these are
the ten exceptional curves of $X$, by Batyrev and Popov's result ~\cite{bp},
these are exactly the generators of $\overline{\NE}^1(X)$.


\bibliographystyle{plain}

\end{document}